\documentclass[11pt,reqno]{amsart}
\usepackage{geometry} 
\usepackage{hyperref}
\usepackage{bbm,bm}
\usepackage{xcolor}
\usepackage{comment}
\usepackage{float}
\usepackage{enumerate}   

\newtheorem{prop}{Proposition}
\newtheorem{thm}{Theorem}
\newtheorem{lemma}{Lemma}
\newtheorem{assume}{Assumption}
\newtheorem{remark}{Remark}
\newtheorem{definition}{Definition}
\newtheorem{cor}{Corollary}

\numberwithin{prop}{section}
\numberwithin{thm}{section}
\numberwithin{lemma}{section}
\numberwithin{assume}{section}
\numberwithin{remark}{section}
\numberwithin{equation}{section}
\numberwithin{definition}{section}
\numberwithin{cor}{section}
\numberwithin{Ex}{section}

\def \E {\mathbb{E}}
\def \P{\mathbb{P}}
\def \R {\mathbb{R}}
\def \P {\mathbb{P}}

\def \cF {\mathcal{F}}
\def \cL {\mathcal{L}}

\def \0 { {\bf{0}}}

\def \bs{\boldsymbol}

\usepackage{graphicx}

\makeatletter
\@namedef{subjclassname@2020}{%
  \textup{2020} Mathematics Subject Classification}
\makeatother

\title{Walsh spider diffusions as time changed multi-parameter processes}\thanks{E. Bayraktar is partially supported by the National Science Foundation under grant DMS-2106556 and by the
Susan M. Smith chair.  J. Zhang is supported by the National Natural Science Foundation of China under Grant No.12201113. }

\author[]{Erhan Bayraktar} \address{Department of Mathematics, University of Michigan}
\email{erhan@umich.edu}
\author[]{Jingjie Zhang}
\address{China School of Banking and Finance, University of International Business and Economics}
\email{jingjie.zhang@uibe.edu.cn}
\author[]{Xin Zhang} 
\address{Department of Finance and Risk Engineering, New York University}
\email{xz1662@nyu.edu}
\keywords{Walsh diffusion, Multi-parameter process, Local time}
\subjclass[2020]{60J60, 60G60. }

\begin{document}

\begin{abstract}
Inspired by allocation strategies in multi-armed bandit model,  we propose a pathwise construction of Walsh spider diffusions.  For any infinitesimal generator on a star shaped graph,  there exists a unique time change associated with a multi-parameter process such that the time change of this multi-parameter process is the desired diffusion. The time change has an interpretation of time allocation of the process on each edge,  and it can be derived explicitly from a family of equations.

\end{abstract}

\maketitle

\section{Introduction}

A Walsh spider diffusion is a singular process on a star shaped graph which behaves like a one-dimensional diffusion on the interior of each edge. Once it hits the vertex, it is kicked away from the vertex in a reflecting manner with a random direction according to some fixed distribution. Indicated by its name, such diffusions were first constructed explicitly by Walsh in \cite{AST_1978__52-53__37_0} as an example of one dimensional diffusions with discontinuous local times. Since then, there has been a continuing interest in both theory and applications of such processes. Extensions of the original process, which was defined on $2$ rays, have been developed for $3$ rays \cite{frank1984random} and $n$ rays \cite{varopoulos1985long}.  Several constructions of Walsh diffusions have been proposed in the literature, including those based on infinitesimal generator \cite{baxter1984equivalence}, Feller's semigroup theory \cite{MR1022917}, resolvents \cite{MR701528}, martingale problem \cite{MaOh23}, and excursion theory \cite{MR859838}. An analogue of It\^{o}'s formula has been established in \cite{KARATZAS20191921}, and the ergodic behavior of Walsh diffusions has been studied in \cite{MR4003554}. Stochastic control and optimization problems on this star-shaped model have been analyzed by \cite{BayraktarErhan2021EoWB,MR3795064,KARATZAS20191921}. Moreover, the connection between queueing theory and Walsh Brownian motion has been justified in \cite{AtCo19}.



In this paper, we propose a new construction of Walsh diffusions as a time change of multi-parameter processes; see Theorem~\ref{thm}. 
An $N$-parameter process can be viewed a collection of random variables indexed by $\R^N_+$. The theory of multi-parameter processes offers an elegant extension to the existing one-parameter theory, and finds diverse applications in various fields, including functional analysis, analytic number theory, sequential stochastic optimization, and data science, and we refer readers to \cite{CairoliR2011Sso,10.3389/fphy.2021.641859,khoshnevisan2006multiparameter} for a nice introduction.

For any infinitesimal generator on a star shaped graph with $N$ edges, we assign each edge a proper reflecting diffusion process $Y_i$ and thus obtain a  multi-parameter process $\mathbb R_+^N \ni (t_1,\dotso,t_N) \mapsto (Y_1(t_1),\dotso, Y_N(t_N))$. 
Inspired by the allocation strategies in 
multi-armed bandit problem \cite{10.1214/aoap/1028903380,MR905347}, we design a random time change $T(t)=(T_1(t),\dotso, T_N(t))$ with respect to the multi-parameter filtration,  which can be interpreted as time allocation of the process on each edge and is determined uniquely by a family of explicit equations involving local times. The construction of the random time change $T(t)$ ensures that the occupation time at the vertex in each direction is distributed according to a given proportion. Furthermore, at any given time $t$, exactly one of $\{T_1(t), \cdots, T_N(t)\}$ satisfies $dT_i(t) = dt$, while $dT_j(t) = 0$ for all $j \ne i$.    After establishing the time-homogeneity of the random time change and the strong Markov property of the multi-pamamerater process, we demonstrate that the time changed multi-parameter process $(Y_1(T_1(t)),\dotso, Y_N(T_N(t)))$ is a diffusion characterized by the desired infinitesimal generator. Additionally, our construction enables us to readily obtain the long-term average occupation time of each edge, as shown in Corollary~\ref{ergodic}.

 The well-known skew Brownian motion is a simple example of the Walsh spider diffusion, where the graph has $2$ edges and the diffusion behaves like a reflecting Brownian motion in the interior of each edge. Different constructions and generalized versions of skew Brownian motion have been studied since the concept was first mentioned by  Ito and McKean \cite[Section 4.2, Problem 1]{ito2012diffusion}; see \cite{10.1214/154957807000000013} and references therein. Among them, ``follow the leader" construction of variably skew Brownian motion 
 in \cite{10.1214/ECP.v5-1018} is closely related to our work. It provided a weak solution to a stochastic differential equation involving local time as a time change of a two-parameter process. While the time change therein is determined by comparing running maximum of two Brownian motions, we derive the time change explicitly through a family of equations of local times. This family of equations represents two conditions imposed on the time change straightforwardly: \emph{(i)} the sum of time allocations on each edge equals the total running time; \emph{(ii)} the local time of each edge at vertex is proportional to the probability of choosing each edge at any time. Our approach not only provides clear intuition, but also enables us to generalize the results from $N = 2$ and reflecting Brownian motion to $N\geq 3$ and general reflecting diffusion. Furthermore, we adopt a different argument in the proof of our main theorem, which is mainly based on time homogeneity of the time change (see Proposition~\ref{prop:T}(iii)) and the strong Markov property of multi-parameter process.

In the next section, we introduce some notions of multi-parameter process and our main result. In Section 3, we present an explicit construction of Walsh diffusion and prove our main result.  Generalizing our approach to  diffusions on a metric graph is also discussed.

\section{Preliminary and main result}\label{sec2}

 Let $\Gamma$ be a star shaped metric graph with one internal vertex $v$ and edges $e_1, \dotso, e_N$. A  Walsh spider diffusion on $\Gamma$ is characterized by an infinitesimal operator $A$ with domain $\mathcal{D}(A)$. Let us introduce the following notations to define $(A, \mathcal{D}(A))$.

For each $i=1,\dotso, N$, a coordinate function $C_i$  is chosen on $e_i$ such that $e_i$ is homeomorphic to an interval $I_i$ of real line and $C_i(v)=0$.  $I_i$ can be either $[0,\infty)$ or $[0,l_i]$ where $l_i>0$ representing the length of edge $e_i$. 
Let us denote by $\partial\Gamma$ the boundary of $\Gamma$, i.e., $\partial\Gamma=\{x: \, x \in e_i, \, C_i(x)=l_i < \infty, i=1,\dotso, N \}$. We equip the star shaped graph $\Gamma$ with the tree metric $d$
\begin{align*}
d(x,x')= 
\begin{cases}
|C_i(x)-C_i(v)|+|C_j(x')-C_j(v)|, \, \quad & i \not= j, \, x \in e_i, \, x' \in e_j, \\
|C_i(x)-C_i(x')|,  & x,x' \in e_i.
\end{cases}
\end{align*}
  A function $f:\Gamma \to \R$ is said to be continuous if $f$ is continuous with respect to the tree metric $d$. Denote by $\mathcal{C}_0(\Gamma)$ the space of continuous functions on $\Gamma$ that vanish at infinity.  We say a function $f \in \mathcal{C}_0(\Gamma)$ belongs to $\mathcal{C}_0^{\infty}(\Gamma)$ if $f|_{e_i^o}$ is smooth, and all derivatives can be naturally extended to $e_i$ for all $i=1,\dotso, N$, where $e_i^o$ denotes the interior of $e_i$. 
 
 For a function $f:\Gamma \to \R$, we denote its derivative at vertex $v$ in the direction of edge $i$ by
 \begin{align*}
 	D_if(v):= \lim\limits_{x \in e_i^o, x\to v} \frac{f(x)-f(v)}{|C_i(x)-C_i(v)|},
 \end{align*}
 if the limit exists, and we denote its derivative at $y \in e_i$, $y \ne v$  by
 \begin{align*}
     Df(y):= \lim\limits_{x \in e_i^o, x\to y} \frac{f(x)-f(y)}{|C_i(x)-C_i(y)|},
 \end{align*}
if the  limit exists.

 Let $\{b_i, \sigma_i\}_{i=1,\dotso, N}$ be bounded and Lipschitz functions on $I_i$, and $\{\sigma^2_i\}_{i=1,\dotso,N}$ are uniformly strictly positive. Define the operators $\cL_i$  via 
 \begin{align*}
 \cL_i f(x)= \frac{1}{2} \sigma_i^2(y_i) \frac{d^2f}{d{y_i}^2}(y_i)+ b_i(y_i) \frac{df}{dy_i}(y_i), \quad x \in e^o_i, \, \,  C_i(x)=y_i, \, \, i=1,\dotso, N. 
 \end{align*}
Let $\{\alpha_{i}:\, i = 1, \cdots,  N\}$ be some fixed positive constants such that $\sum_{i=1}^N \alpha_{i} = 1$. Define an operator $A$ with domain $\mathcal{D}(A) \subset \mathcal{C}_{0}^{\infty}(\Gamma)$ as the following.
\begin{align}\label{eq:infinitesimal} 
& Af(x):=\cL_if(x), \, x \in e_i, \, i=1,\dotso, N, \notag \\
& \mathcal{D}(A):= \left\{f \in \mathcal{C}_{0}^{\infty}(\Gamma): \, Af \in \mathcal{C}_0(\Gamma), \,  \sum_{j=1}^N \alpha_{j} D_j f(v)=0, \, Df(y)=0, \, \forall y \in \partial\Gamma \right\}.  
\end{align}
Note that the condition $Af \in \mathcal{C}_0(\Gamma)$ implies the infinitesimal changes of $f$ coincide at the vertex $v$ in every direction, i.e., 
$\cL_if(v)=\cL_j f(v)$ for any $1 \leq i,j \leq N$. 

It was proved in \cite{MR1022917,MR1245308} using Hille-Yosida theorem that
there exists a continuous Feller process $X(\cdot)$ on $\Gamma$ with infinitesimal generator $(A,\mathcal{D}(A))$, i.e., 
\begin{align*}
\lim\limits_{ t \to 0} \frac{P_t f(x)-f(x)}{t} =A f(x), \quad \text{for $x \in \Gamma$ and $f \in \mathcal{D}(A)$},
\end{align*}
where $P_tf(x):= \E[f(X(t)) \, | \, X_0=x]$ is the semigroup generated by $X(\cdot)$. In words, on the interior of each edge, $e_i^o$, the process $X(\cdot)$ behaves like a diffusion with drift $b_i$ and volatility $\sigma_i$. And once hitting the vertex $v$, the process will be assigned independently a direction $e_j$ with probability $\alpha_{j}>0$; see e.g. \cite{MR1022917,KARATZAS20191921}. 

\begin{remark}
    The domain $\mathcal{D}(A)$ determines the behavior of process at the vertex. In the simplest case $N=1$, $\alpha_1=1$, $b_1\equiv 0$, $\sigma_1 \equiv 1$, with $\mathcal{D}(A)$ defined in \eqref{eq:infinitesimal} we get a reflecting Brownian motion on the interval $I_1$. Another choice of domain $\tilde{\mathcal{D}}(A):=\left\{ f\in \mathcal{C}_{0}^{\infty}(\Gamma): \, Af \in \mathcal{C}_0(\Gamma), \,  f(v)=0  \right\}$ provides a Brownian motion with absorption at $v$; see e.g. \cite{ItMc63}.
    
Suppose there exists a Walsh diffusion $(X_t)$ with the prescribed behavior at the vertex $v$. It is not hard to verify that to make the limit $\lim_{t \to 0}\frac{P_tf(v)-f(v)}{t}$exist, it is necessary to require $f \in \mathcal{D}(A)$; see the proof of Proposition~\ref{prop:verification}. In words, the weighted average derivative of $f$ in each direction should be $0$ so that the infinitesimal generator is well-defined for such reflecting diffusion.
\end{remark}

The purpose of this paper is to provide a pathwise construction of Walsh diffusion, which is achieved by showing that a Walsh diffusion can be represented as a time change of a multi-parameter process. Let us first introduce some notations and terminology about multi-parameter process. All filtrations of stochastic processes are assumed to satisfy the usual conditions in this paper. 

Throughout,  the $i$-th coordinate of any point $\bf s \in \R^N_+$ is written as $s_i$.  The order structure of $\R^N_+$ is explained as the following.
For any $\bs t,  \bs s \in \R^N_+$,  we write $\bs s \leq \bs t \,(\bs s < \bs t)$ or equivalently $\bs t \geq \bs s \,(\bs t > \bs s)$  if for all $i = 1, \dotso, N$,  $s_i \leq t_i \,(s_i <  t_i)$.  

On a probability space $(\Omega, \mathcal{F}, \mathbb{P})$, a collection of sub $\sigma$-algebras, $\mathbb{F}=(\mathcal{F}_{\bs t},  \bs t \in \R^N_+)$, is an $N$-parameter filtration if  $\bs s \leq \bs t $ implies that $\mathcal{F}_{\bs s} \subset \mathcal{F}_{\bs t}$.  An $N$-parameter stochastic process $Z = (Z_{\bs t},  \bs t \in \R^N_+)$ is adapted to the filtration $\mathbb{F}$ if for all $\bs t \in \R^N_+$,  $Z_{\bs t}$ is $\mathcal{F}_{\bs t}$-measurable.



\begin{definition} 
A random variable $\bs \tau = (\tau_1, \cdots, \tau_N) \in \mathbb{R}^N_+$ is a stopping point w.r.t. the filtration $\mathbb{F}$, if the following holds for any $\bs s \in \R^N_+$
\[
\bigcap_{i = 1}^{N} \{\tau_i \le s_i\} \in \cF_{\bs s}.
\]
For any stopping point $\bs \tau$, we associate it with a $\sigma$-algebra 
\begin{align*}
\cF_{\bs \tau}:= \left\{A \in \cF: \, A \cap \{\bs \tau \leq \bs s \} \in \cF_{\bs s} \text{ for any $\bs s \in \R_+^N$} \right\} .
\end{align*}
\end{definition}

The concept of stopping point is a generalization of stoping time in the context of multi-parameter processes. Similarly, we define multi-parameter random time change as a family of stopping points that satisfies certain conditions.

\begin{definition}
A $\mathbb{R}^{N}_+$-valued process $\bs T(\cdot) = (T_1(\cdot), \cdots, T_N(\cdot))$ is called a multi-parameter random time change  w.r.t. the filtration $\mathbb{F}$ if  the following two properties hold:
\begin{enumerate}[(i)]
\item $\bs T$ has continuous non-decreasing sample paths on $\R_+^N$, i.e.,  $t \mapsto T_i(t)$ is non-decreasing and continuous for each $i=1,\dotso, N$ almost surely; 

\item  $\bs T(t) $ is a stopping point w.r.t. the filtration $\mathbb{F}$ for each  $t\ge 0$. 
\end{enumerate}
\end{definition}

We are ready to state our main theorem, which says that a Walsh diffusion can be represented as a time change of a multi-parameter process. Furthermore, the time change is the unique solution to a family of equations. 

To this end, we identify each point $x \in \Gamma$ as a vector $\bs y=(y_1,\dotso, y_N)$ in $\R^N$,  where for each $i=1,\dotso, N$, $y_i=C_i(x)$ if $x\in e_i$ and $y_i=0$ if $x \not \in e_i$.
It is clear that the map 
\begin{equation}\label{eq:representation}
\bs y(x):= (y_1,\dotso,y_N) 
\end{equation}
is a continuous injection from $\Gamma$ to $\R^N$. 


In the rest of the paper, we fix a starting point $ x^0 \in \Gamma$ with $\bs y(x^0)=(y_1^0,\dotso,y_N^0)$ and an infinitesimal generator $(A, \mathcal{D}(A))$ as in (\ref{eq:infinitesimal}). For each $i=1,\dotso,N$, there exists a reflecting diffusion process $Y_i(\cdot)$ on a filtered probability space $(\Omega^i, \cF^i, (\cF^i_t), \P^i  )$ such that $Y_i(\cdot)$ has infinitesimal generator $\mathcal{L}_i$, reflects at end point(s) on interval $I_i$, and $Y_i(0)=y_i^0$; see e.g. \cite{Pi24} for constructions of reflecting diffusions. Throughout the paper, it is convenient to take $\Omega^i=\mathcal{C}([0,\infty);\mathbb R)$ and $Y_i(\cdot)$ to be the canonical process on $\Omega^i$.   Denote by $L^{i}(\cdot)$ the local time of semi-martingale $Y_i$ at the vertex $v$, where we adopt the definition of local time from \cite[Chapter VI]{MR1725357}. To guarantee the strong existence of $Y_i$ and the recurrence of $Y_i$ at $0$, we make the following assumption. Thanks to \cite[Proposition 5.5.22]{KaSh91}, each $Y_i$ is recurrent at $0$ under Assumption~\ref{assume} \emph{(ii)}.
\begin{assume}\label{assume}
 $ $
    \begin{enumerate}
        \item[(i)] The coefficients  $\{b_i,\sigma_i\}_{i=1,\dotso,N}$ are bounded and Lipschitz continuous, and $\{\sigma_i^2\}_{i=1,\dotso,N}$ are uniformly strictly positive. 
        \item[(ii)] For each $i=1,\dotso,N$, we have either $|I_i|=l_i<\infty$, or $$\int_0^{\infty} \exp\left({-2\int_0^{\xi} \frac{b_i(h)}{\sigma_i^2(h)}\,dh}\right)\,d\xi=\infty.$$
    \end{enumerate}
\end{assume}

We take the $N$-parameter process $(\bs Y(\bs s)=(Y_1(s_1), \dotso, Y_N(s_N)), \bs s \in \mathbb{R}_+^N)$ on the filtered probability space $(\Omega, \cF, (\cF_{\bs s}), \P)$ where 
\begin{align}\label{eq:multiprobspace}
 \Omega&:= \Omega^1 \times \dotso \times \Omega^N, \quad \quad \quad \quad \quad  \quad 
 \cF := \sigma^{\mathbb P} \left(\cF^1 \otimes \dotso \otimes \cF^N\right), \\
 \P&:=\P^1 \otimes \dotso \otimes \P^N, \quad \quad \quad \quad \quad  \quad 
 \mathcal{F}_{\bs s}:= \sigma^{\mathbb P} \left(\mathcal{F}^1_{s_1} \otimes \dotso \otimes \mathcal{F}^N_{s_N}\right), \, \, \forall \bs s \in \R_+^N. \notag
\end{align}
We adopt the convention that $\sigma^{\mathbb P}(\mathcal{G})$ represents the complete $\sigma$-algebra generated by $\mathcal{G}$ under $\mathbb P$ for any $\mathcal G \subset \cF^1 \otimes \dotso \otimes \cF^N$. In Lemma~\ref{lem:rightcontinuity}, we will verify the right-continuity of the $N$-parameter filtration $(\cF_{\bs s}, \bs s \in \mathbb{R}_+^N)$, i.e., $\mathcal{F}_{\bs t}=\bigcap_{\bs s > \bs t} \cF_{\bs s}$ for any $\bs t \in \mathbb{R}_+^N$, given that each filtration $\mathcal F^{i} = (\mathcal F^{i}_{s_i}, s_i \in \mathbb{R}_+)$ satisfies the usual conditions for $i= 1,2, \cdots, N$.

\begin{thm}\label{thm}
Under Assumption~\ref{assume}, given  the $N$-parameter process $\bs Y(\cdot)$ there exists a unique random time change $\bs T(\cdot)$  w.r.t. the filtration $(\mathcal{F}_{\bs s})$ such that  $$ X(\cdot):=(Y_1(T_1(\cdot)), \dotso, Y_N(T_N(\cdot)))$$
is a diffusion on $\Gamma$ with filtration $(\mathcal{F}_{\bs T(t)})$ that starts from position $x^0$ with infinitesimal generator $(A, \mathcal{D}(A))$.

Furthermore, almost surely for each $t \geq 0$, $(T_i(t)=s_i)_{i = 1}^{N}$ is the unique solution of equations
\begin{equation*}
\sum_{i=1}^{N} s_i = t, \, \, \, \text{and} \, \, \, \frac{L^{i}(s_i)}{\alpha_{i}}=\frac{L^{j}(s_j)}{\alpha_{j}} \, \, \text{for all} \, \, 1 \leq i,j \leq N. 
\end{equation*}     
\end{thm}

\begin{remark}
It was proved in \cite{MR1487755} that Walsh Brownian motion does not generate a Brownian filtration for $N \geq 3$. To this end, it showed that the property of being ``cozy'' for filtered probability spaces is invariant under morphism. Then the claim follows from the fact that any filtered probability space generated by Brownian motion is ``cozy'', while Walsh Brownian motion is not. A relatively straightforward proof can been found in \cite[Chapter 1.3]{yan2018topics} 

 The multi-parameter filtration \eqref{eq:multiprobspace} considered in this paper differs fundamentally  from one-parameter filtrations, as the time index takes values in $\mathbb R_+^N$ rather than $\mathbb R_+$.
Specifically, let $(\mathcal F^i_t)$, $i=1,\dotso,N$, be chosen as Brownian filtrations. According to Theorem {\ref{thm}}, the Walsh Brownian motion $X$ is not adapted to the Brownian filtration $(\otimes_{i=1}^{N} \mathcal F^i_t)$. Instead, $X$ is adapted to $(\mathcal{F}_{\bs T(t)})$, which is a time change of multi-parameter filtration $(\mathcal F_{\bs s})$ in (\ref{eq:multiprobspace}). Actually, the fact that the filtration $(\mathcal F_{\bs T(t)})$ is not a Brownian filtration comes fundamentally from the lack of total order in $R_+^N$. Because of the partial ordering of $R_+^N$, the filtration $(\mathcal F_{\bs T(t)})$ takes into account the ``order direction" given by $\bs T(t)$, for which the information increase
cannot be determined with a Brownian filtration. 

\end{remark}

\section{Proof of Main Result} 

We start with the construction of a random time change $\bs T$ with respect to the multi-parameter filtration $(\mathcal{F}_{\bs s})$. Define the left-continuous inverse of $L^{i}(\cdot)/\alpha_i$ as  
\begin{equation} \label{eq:hi}
h_i(l) := \inf\left\{t\ge 0: \frac{1}{\alpha_i} L^{i}(t) \ge l \right\},  \, \,  l \in \mathbb{R}_+,
\end{equation}
for $i=1, \dotso, N$, and denote their sum by \begin{equation} \label{eq:h}
h(l) := \sum_{i = 1}^{N}  h_i(l).    
\end{equation}
It is clear that $h$ and $h_i,\, i=1,\dotso, N$, are strictly increasing functions and transform local time to real time. Let us also take the left continuous inverse of $h$, which transform real time to local time.
\begin{align}\label{eq:Xlocaltime}
L(t) := \inf\{l \geq 0: h(l) \geq t\}.
\end{align}
Since $h(l)$ is a strictly increasing function, its left-continuous inverse $L(t)$ is actually continuous. We will see that $L(t)$ is the local time of process $X$ at the vertex $v$ in \eqref{eq:localtimesveri}.

 We now introduce the random time change $\bs T(t) = (T_1(t), T_2(t), \cdots, T_N(t))$, where each $T_i(t)$ has the interpretation of the total time that the Walsh diffusion process spends on the edge $i$ up to time $t$. The main idea of our construction is the following: \emph{(i)} impose the condition \[\frac{L^1(T_1(t))}{\alpha_1}=\dotso=\frac{L^N(T_N(t))}{\alpha_N},\] so that at each time $t$ the occupation time of vertex $v$ for each $Y_i$ is proportional to $\alpha_i$; \emph{(ii)} at each time $t$ , only the process $Y_i$ for which $\Delta h_i(L(t)) := h_i(L(t)+) - h_i(L(t))>0$ will evolve, i.e. $dT_i(t)=dt$, while all other processes $Y_j$ with $\Delta h_j(L(t))  = 0$ remain unchanged, i.e. $dT_j(t)=0$. 
 Precisely, we have the following definition
 \begin{definition}
     Given the reflecting diffusion processes $(Y_i(\cdot))_{i = 1}^N$ and the filtered probability space $(\Omega, \mathcal{F}, (\mathcal{F}_{\bs s}),\mathbb{P})$ in (\ref{eq:multiprobspace}), we define $\bs T = (T_1(\cdot), \cdots, T_N(\cdot))$ via
      \begin{equation} \label{T:construction}
T_i(t) := h_i(L(t)) +\mathbbm{1}_{\{\Delta h_i(L(t))>0\}}(t-h(L(t))), \quad \forall \, i \in \{1,2,\cdots, N\},
\end{equation}
where $h_i(\cdot), h(\cdot)$ and $L(\cdot)$ are given by (\ref{eq:hi}), (\ref{eq:h}) and (\ref{eq:Xlocaltime}), respectively.
 \end{definition}

\begin{remark}
Note that here the time change $\bs T$ is expressed in terms of the left-continuous inverse $h_i$ of local times, where $h_i(l)$ stands for the starting time of excursion at the local time level $l$. Then by \eqref{T:construction}, it is clear that $t \mapsto T_i(t)$ is increasing. 


\end{remark}

In the following lemma, we show that almost surely there is at most one process with a jump at $h_i(l)$ for all $l \ge 0$.

  \begin{lemma}\label{lem:simulflat} $\{h_i\}_{i=1}^{N}$ have no simultaneous jumps almost surely. That is
for \[\Omega_0:= \left\{\omega \in \Omega: \sum_{l \geq 0} \Delta h_i(l)(\omega)\Delta h_j(l)(\omega) = 0, \forall i \ne  j   \right\}\] we have $\mathbb{P}[\Omega_0]=1$.  
  \end{lemma}
\begin{proof} 
  Notice that $\Omega_0 =  \Omega_1 \cap \Omega_2$,
 where
 \begin{align*}
\Omega_1 & = \left\{\omega\in \Omega: \sum_{l>0} \Delta h_i(l)(\omega)\Delta h_j(l)(\omega) = 0, \forall i \ne  j \right\};\\
\Omega_2 & = \left\{\omega\in \Omega:  \Delta h_i(0)(\omega)\Delta h_j(0)(\omega) = 0, \forall i \ne  j \right\}.
 \end{align*}

 Let us first prove that $\mathbb{P}(\Omega_1) = 1$ using the excursion theory. For each process $Y_i$, point $0$ is regular, i.e., $\mathbb{P}^0(\tau_0 = 0) = 1$ and recurrent, i.e., $\mathbb{P}^{y_i}(\tau_0 < \infty) = 1, \forall y_i \in I_i$, where $\tau_{0}:=\inf\{t \geq 0: \, Y_i(t)=0 \}$ denotes the hitting time of $0$. We denote the excursion process of $Y_i$ from point $0$  by $(e_i(s))_{s>0}$. The excursion process takes values in the  excursion space \[U^{(i)}_{\delta} = U^{(i)} \cup \delta,\] where  $\delta$ is the function that is identically 0,  $U^{(i)}=\cup_{n = 1}^{\infty} U^{(i)}_n$, and \[
U^{(i)}_n = \left\{\text{continuous path } c: [0,\tau]\to I_i \text{ where } \tau = \inf\{t> 0: c(t)= 0\} \text{ satisfies } \frac{1}{n}< \tau <\infty  \right\} .\]
The excursion process is given by \[
e_i(s) = \begin{cases}
    Y_i(h_i(\frac{s}{\alpha_i})+\cdot ):  [0, \Delta h_i(\frac{s}{\alpha_i})] \to I_i  & \text{if } \Delta h_i(\frac{s}{\alpha_i}) > 0;\\
    \delta & \text{if } \Delta h_i(\frac{s}{\alpha_i}) = 0.
\end{cases}
\]
It is known that $e_i(\cdot)$ is a Poisson point process; see e.g. \cite{Bl92} .

For each $n\geq 1$, \[
N_i^{(n)}(l) := \sum_{s \leq l} \mathbbm{1}_{\{e_i(s) \in U_n^{(i)}\}}, \quad l > 0
\] denotes the number of excursions before local time $l$ whose time duration $\Delta h_i(\frac{s}{\alpha_i})$ is greater than $\frac{1}{n}$. Therefore we have \[
N_i^{(n)}(l)  \leq n h_i \left(\frac{l}{\alpha_i}+\right)  < \infty,
\] 
and $N_i^{(n)}(\cdot)$ is a Poisson process with finite jump rate. For any $i \in \{1,2, \cdots, N\}, n\geq 1$, define another Poisson process $\tilde N_i^{(n)}(l): = N_i^{(n)}(\alpha_i l)$. By the independence of $Y_i$, $\tilde N_i^{(n)}(\cdot), i \in \{1,2, \cdots, N\}$ are also independent. According to \cite[Chapter XII, Proposition 1.5]{MR1725357}, independent Poisson processes have no simultaneous jumps, i.e.,
\[\mathbb{P}\left(\sum_{l>0} \Delta \tilde N_i^{(n)}(l)\tilde N_j^{(n)}(l) = 0, \forall i \ne j \right) = 1, \quad \forall n\geq 1.\]

Notice that $\Delta h_i(l)(\omega) > 0$ if and only if $L^i(\cdot)$ has a constant stretch at level $\alpha_i l$, i.e., $e_i(\alpha_i l) \ne \delta$, which is equivalent to that there exists $n \geq 1$ such that $\Delta \tilde N_i^{(n)}(l) > 0$. Thus 
\begin{align*}
\mathbb{P}\left(\Omega_1\right)= &\mathbb{P}\left(\left\{\sum_{l>0} \Delta h_i(l)\Delta h_j(l) = 0, \forall i \ne  j \right\}\right) \\
= & \mathbb{P}\left(\bigcap_{n\geq 1} \left\{ \sum_{l>0} \Delta \tilde N_i^{(n)}(l)\tilde N_j^{(n)}(l) = 0, \forall i \ne j \right\} \right) \\
= & \lim_{n\to \infty} \mathbb{P}\left(\sum_{l>0} \Delta \tilde N_i^{(n)}(l)\tilde N_j^{(n)}(l) = 0, \forall i \ne j \right) \\
= & 1.
\end{align*}

Next consider $\Omega_2 = \left\{\omega\in \Omega: \Delta h_i(0)(\omega)\Delta h_j(0)(\omega), \forall i \ne  j \right\}$. For  each $Y_i$, when $Y_i(0) = 0$, we have $\Delta h_i(0) = 0$. If this does not hold, there exists $\epsilon > 0$ such that for all $s> 0$, $h_i(s) > \epsilon$, which implies $L_i(\epsilon) < \alpha_i s$ for any $s>0$. However it contradicts with the fact that $\mathbb{P}[ L^i(\epsilon) > 0 \,|\, Y_i(0) = 0] = 1$. Therefore in both cases of the initial position $x = v$ and $x \ne v$, there is at most one $i \in \{1,2, \cdots, N\}$ such that $\Delta h_i(0)(\omega) > 0$. This leads to $\mathbb{P}(\Omega_2) = 1$ and hence \[
\mathbb{P}(\Omega_0) = 1 - \mathbb{P}(\Omega_1^c \cup \Omega_2^c) = 1.
\]
 \end{proof}
 
 \begin{remark}
Note that $h_i$ has a jump at $l$, i.e., $h_i(l) < h_i(l+)$, is equivalent to that the local time $L^i(\cdot)$ is flat at  level $\alpha_i l$, i.e., $ \text{Leb}(\{t:\, L^i(t)=\alpha_i l \})>0$.  Hence the absence of simultaneous jumps in processes $\{h_i\}_{i =1}^ N$ is equivalent to the absence of same level of plateaus in processes $\{L^i(\cdot)/\alpha_i\}_{i =1}^ N$. It is known that the local time $L^i(\cdot)$ for process $Y_i$ only exhibits a plateau when the process is on an excursion away from $v$. Therefore, heuristically, this lemma implies that for any $l \geq 0$, at most one process among $\{Y_i\}_{i =1}^ N$ is on an excursion at local time $\{\alpha_i l\}_{i=1}^N$. 
 \end{remark}

The following lemma shows that if the filtration $(\mathcal{F}^i_{s_i})$ is right-continuous and complete for each $i=1,\dotso, N$, then the multi-parameter filtration $(\mathcal{F}_{\bs s})$ defined in \eqref{eq:multiprobspace} is right-continuous as well. 

 \begin{lemma}\label{lem:rightcontinuity}
The filtration $(\cF_{\bs s})$ is right continuous, i.e., $\mathcal{F}_{\bs t}=\bigcap_{\bs s > \bs t} \cF_{\bs s}$ for any $\bs t \in \mathbb{R}_+^N$.
\end{lemma}
\begin{proof}
     Let us take a decreasing sequence $(\bs s^k)_{k \in \mathbb N}$ such that $\bs s^k>\bs t$ and $\lim\limits_{k \to \infty} \bs s^k=\bs t$. Notice that the claim is equivalent to $\mathbb P \left[A \, | \, \cF_{\bs t} \right]= \mathbb P \left[ A \, | \, \bigcap_{k \in \mathbb N} \cF_{\bs s^k}\right]$  for any $A \in \cF$. By a standard monotone class argument, it suffices to prove $\mathbb P \left[A \, | \, \cF_{\bs t} \right]= \mathbb P \left[ A \, | \, \bigcap_{k \in \mathbb N} \cF_{\bs s^k}\right]$ for all $A=A_1\times \dotso \times A_N$ with $A_i \in \cF^i$, $i=1,\dotso, N$.         
     
     Since each filtration $(F^i_{s_i})_{s_i \in \mathbb R_+}$ is complete and right continuous, the probability measure $\mathbb P$ is a product of $\mathbb P^1,\dotso, \mathbb P^N$, we have 
     \begin{align*}
     \mathbb P[ A \, | \, \cF_{\bs t}]&=\prod_{i=1}^N \mathbb P^i[A_i \,| \, \cF^i_{t_i}]=\prod_{i=1}^N \lim\limits_{k \to \infty} \mathbb P^i[A_i \,| \, \cF^i_{s^k_i}]= \lim\limits_{k\to \infty} \prod_{i=1}^N \mathbb P^i [A_i \, | \,\cF^i_{s^k_i} ],
     \end{align*}
     where in the second equality we make use of the backward martingale convergence theorem; see e.g. \cite[Appendix A.2]{GaFr16}. By the same token, the last term is equal to $\lim\limits_{k \to \infty} \mathbb P \left[ A \, | \, \mathcal{F}_{\bs s^k} \right]=\mathbb P \left[ A \, | \, \bigcap_{k \in \mathbb N} \cF_{\bs s^k}\right]$, which completes our proof.
\end{proof}

%

We are now ready to present the key properties of the random time change $\bs T(t)$ defined in (\ref{T:construction}). Before that we recall the definition of shift operator $\bs \theta$ on canonical spaces.

Denote the path space by $\Omega= \mathcal{C}([0,\infty);\R)^{N}$, and each element therein  by $\omega=(\omega_1, \dotso, \omega_N)$
For any non-negative random variable $\bs \tau=(\tau_1, \dotso, \tau_N)=: \Omega \to \R_+^N$, we define the map $\bs\theta_{\bs \tau} : \Omega \to \Omega $ via 
\begin{align*}
\bs \theta_{\bs \tau} ( \omega) (t_1, \dotso, t_N)= ( \omega_1(t_1+ \tau_1(\omega)), \dotso, \omega_N(t_N+\tau_N(\omega))), 
\end{align*}
and its corresponding shift operator $\bs \theta_{\bs \tau}$ via 
\begin{align*}
\bs \theta_{\bs \tau} \xi (\omega):= \xi ( \bs \theta_{\bs \tau} (\omega)) \text{ for any random variable $\xi: \Omega \to \R$}. 
\end{align*}

\begin{prop}\label{prop:T}
The  $\bs T$ defined in \eqref{T:construction} has the following properties. 
\begin{enumerate}[(i)]
\item For every $\omega \in \Omega_0$ with $\Omega_0$ defined in Lemma~\ref{lem:simulflat} and any $t\in \mathbb{R}_+$, $(s_i:=T_i(t))_{i=1}^N$ is the unique solution to 
\begin{equation}\label{eq:familyEq}
\sum_{i=1}^{N} s_i = t, \, \, \, \text{and} \, \, \, \frac{L^{i}(s_i)}{\alpha_{i}}=\frac{L^{j}(s_j)}{\alpha_{j}} \, \, \text{for all} \, \, 1 \leq i,j \leq N. 
\end{equation}
Moreover, we have $L(t)=\frac{L^i(T_i(t)) }{\alpha_i}, \, \forall \, i=1,\dotso, N$, for all $t \geq 0$. 

\item $\bs T$ is a multi-parameter random time change.
\item $\bs T$ is time homogeneous.  That is for any $t,s \geq 0$, we have that 
\begin{align*}
\bs T(s+t)=\bs T(s)+\bs \theta_{\bs T(s)} \bs T(t) \quad a.s. 
\end{align*}
\end{enumerate}
\end{prop}
\begin{proof}
\emph{(i):} First we prove that $\bs T$ defined by \eqref{T:construction} satisfies \eqref{eq:familyEq}. Recall the definition of $T_i$
\begin{align*}
    T_i(t) := h_i(L(t)) +\mathbbm{1}_{\{\Delta h_i(L(t))>0\}}(t-h(L(t))).
\end{align*}
Thanks to Lemma \ref{lem:simulflat}, for every $\omega \in \Omega_0$, $ \Delta h(l)>0$ if and only if there is exactly one $i\in \{1,2, \cdots, N\}$ such that $ \Delta h_i(l)>0$. Thus \[
\sum_{i}  \mathbbm{1}_{\{\Delta h_i(L(t))>0\}} =  \mathbbm{1}_{\{\Delta h(L(t))>0\}}.
\]
It follows from the construction of $\{T_i\}_{i=1}^N$ that \[
\sum_{i}T_i(t) = h(L(t)) + \mathbbm{1}_{\{\Delta h(L(t))>0\}}(t - h(L(t))) = t.
\]
Since $h_i$ is the left-continuous inverse of $L^i/\alpha^i$, it follows directly that when  $\Delta h_i(L(t)) = 0$ 
\[
\frac{L^i(T_i(t)) }{\alpha_i} = L(t).
\]

When $\Delta h_i(L(t)) > 0$, by the definition of left-continuous inverse, $\frac{L^i}{\alpha_i}$ must be constant (on a ``plateau'') at level $L(t)$ over the interval $ \left[h_i(L(t)), h_i(L(t)) +\Delta h_i(L(t)) \right]$. Moreover, by Lemma \ref{lem:simulflat}, $\Delta h_i(L(t))  = \Delta h(L(t)) $, which is no less than $t - h(L(t))$, given that $\Delta h(L(t))  > 0$ and $L$ is the left-continuous inverse of $h$. Thus, \[h_i(L(t))+(t-h(L(t))) \in  \left[h_i(L(t)), h_i(L(t)) +\Delta h_i(L(t)) \right]\] and 
 \[
\frac{L^i(h_i(L(t)+\mathbbm{1}_{\{\Delta h_i(L(t))>0\}}(t-h(L(t)))))}{\alpha_i} = \frac{L^i(h_i(L(t)))}{\alpha_i} = L(t).
\]

Then we prove that for any $s\geq 0$, solution to \eqref{eq:familyEq}  is unique. Suppose there is another family of solutions $\{ s_i'\}_{i=1}^N$ to \eqref{eq:familyEq}. Let us take 
\begin{align*}
c= \frac{L^{1}(s_1)}{\alpha_1}= \dotso= \frac{L^{N}(s_N)}{\alpha_N},
\end{align*}
and 
\begin{align*}
c'= \frac{L^{1}(s'_1)}{\alpha_1}= \dotso=\frac{ L^{N}(s'_N)}{\alpha_N}.
\end{align*}
Since $\sum_{i=1}^N s_i = \sum_{i=1}^N s_i'$, without loss of generality we assume that $s_1 > s'_1, \, s_2 <s'_2$. Therefore by the monotonicity of local times, we have $$c=\frac{L^{1}(s_1)}{\alpha_1} \geq \frac{L^1(s_1')}{\alpha_1}=c'=\frac{L^2(s'_2)}{\alpha_2} \geq \frac{L^2(s_2)}{\alpha_2}=c,$$
and hence $c=c'$. Then $h_1$ and $h_2$ have jumps simultaneously at $l=c$. Indeed, the equalities above imply that $ h_1(c) \leq s_1' < s_1 \leq  h_1(c+)$ and $h_2(c) \leq s_2 < s_2' \leq h_2(c+)$.
It contradicts with Lemma \ref{lem:simulflat}.
\vspace{0.5cm} 

\emph{(ii):} 
According to the definition \eqref{T:construction}, $T_i(t)$ is increasing in $t$ for every $i \in \{1,2,\cdots, N\}$. It remains to show that $T_i(t)$ is continuous in $t$ and $\bs T(t)$ is a stopping point for every $t \in \mathbb R_+$.

By \emph{(i)} we have that for any $0\leq s< t$, $
 \sum_{i} T_i(t) - \sum_{i}T_i(s) = t-s$. Then  \[
 |T_i(t) - T_i(s)| \le |t-s|, \, \,  \forall \, t,s \ge 0,
 \]
 which indicates that $t \mapsto T_i(t)$ is continuous. For continuous and increasing $T_i$, to show that $T$ is a random time change, it suffices to show that $$\{T_1(t) \le s_1,  \cdots,  T_N(t) \le s_N\} \in \mathcal{F}^1_{s_1} \otimes \cdots \otimes \mathcal{F}^N_{s_N},  \quad \forall \, \bs s=(s_1,\dotso,s_N) \in \R_+^N. $$  

To this end, we adopt the idea from \cite[Theorem 3]{MR905347}, showing that $T$ is the limit of stopping points that have exactly only one component increasing at each time $t$ according to a \emph{priority scheme}. 

For a fixed $\epsilon > 0$ and each $i=1,\dotso,N$, define 
\begin{align*}
c^{(i)}_k :=&h_i(\epsilon k)-h_i(\epsilon(k-1)),  \, \, \, \, \forall k \geq 1,
\end{align*}
and a sequence of intervals $\left\{U_{k}^{(i)}: \, k \in \mathbb{N}_+, \, i=1,\dotso, N \right\}$ via 
\begin{align*}
U_{k}^{(i)}: = \left[h(\epsilon(k-1) )+ \sum_{l=1}^{i-1}c^{(l)}_{k},  \,\,h(\epsilon(k-1) ) +  \sum_{l =1}^{i}c^{(l)}_{k}\right).
\end{align*}
Consider a family of stopping points $\bs T^{\epsilon} = (T^{\epsilon}_1, \cdots, T^{\epsilon}_N)$ defined by
\begin{align*}
T^{\epsilon}_i(t) = \sum_{k = 1}^{\infty} \int_0^t \mathbbm{1}_{\{u \in U_{k}^{(i)}\}} \, du.
\end{align*}

Intuitively,  we let $T^{\epsilon}_1$ run with rate $1$ until $\frac{L^{1}(t)}{\alpha_1} $ reaches level $\epsilon$.  Proceed sequentially with each $T^{\epsilon}_{i}$ until $\frac{L^{i}(t)}{\alpha_i} $ reaches level $\epsilon$ for $i = 2, \cdots,  N$.  When  $\frac{L^{N}(t)}{\alpha_N} $ is at level $\epsilon$,  let $T^{\epsilon}_1$ run again until $\frac{L^{1}(t)}{\alpha_1} $ reaches level $2\epsilon$.  Proceed sequentially with each $i = 2, \cdots,  N$ until $\frac{L^{N}(t)}{\alpha_N} $ is at level $2\epsilon$,  and so on.  In the above notation,  $\sum_{j= 1}^{k} c^{(i)}_j = h_i(k\epsilon )$, and for all $i \in \{1,\cdots, N\}$, $\frac{L^{i}(T^{\epsilon}_i(t))}{\alpha_i}  = k\epsilon$ whenever $t = h(\epsilon(k-1) )+ \sum_{l=1}^{N}c^{(l)}_{k}=h(\epsilon k)$.  Indeed, according to the construction $T^{\epsilon}_i(h(\epsilon k))=h_i(\epsilon k)$, and hence $\frac{L^{i}(T^{\epsilon}_i(t))}{\alpha_i}=\frac{L^{i}(h_i(k \epsilon))}{\alpha_i}=k \epsilon$.  We summarize that  $T^{\epsilon}_i(t) = T_i(t) = h_i(k\epsilon)$, $i=1, \dotso,N$, when $t = h(k\epsilon)$. 

With $\epsilon = 2^{-n}$, $\bs T^{\epsilon}(t)$ converges to $\bs T(t)$ as $n \to \infty$ due to the fact that  $t \mapsto T_i(t)$ is increasing and continuous. Fix any $t_0 \in \mathbb R$ and $\delta>0$, we claim that $|T_i^{2^{-n}}(t_0)-T_i(t_0)| < \delta$ for large enough $n$. Indeed, take some $n_0 \in \mathbb N$ and $k\in \mathbb N$ such that $t_0 \in [(k-1)/2^{n_0},k/2^{n_0})$ and $|h_i(k/2^{n_0} ) - h_i((k-1)/2^{n_0})| < \delta/2 $. According to our construction, $T^{2^{-n}}_i((k-1)/2^{n_0})=T_i((k-1)/2^{n_0}) =h_i((k-1)/2^{n_0})$, $T^{2^{-n}}_i(k/2^{n_0})=T_i(k/2^{n_0}) =h_i(k/2^{n_0})$ for any $n \geq n_0$. Therefore since $t \mapsto T_i^{2^{-n}}(t)$ and $ t \mapsto T_i(t)$ are increasing, it must hold true that for any $n \geq n_0$, 
\[|T_i^{2^{-n}}(t_0)-T_i(t_0)| < \max\{|T_i^{2^{-n}}((k-1)/2^{n_0})-T_i(k/2^{n_0})|,|T_i^{2^{-n}}(k/2^{n_0})-T_i((k-1)/2^{n_0})| \} < \delta.\]

In the end, since $\bs T^{\epsilon} (t)$ is a stopping point and the filtration $\mathcal{F}(s)$ is right-continuous by Lemma \ref{lem:rightcontinuity}, its limit $\bs T$ must be a stopping point as well.
  
\vspace{0.5cm}

\emph{(iii)}: 
Take $\Omega_0$ as defined in Lemma~\ref{lem:simulflat}. Let us first prove the result assuming
\begin{align}\label{eq:propTassume}
    \bs \theta_{\bs T(s)}(\omega) \in \Omega_0 \quad \text{ for almost every $\omega \in \Omega_0$.} 
\end{align}
Thanks to \emph{(i)}, it suffices to show that for  almost every $\omega \in \Omega_0$,
\begin{equation}\label{eq:propTnece1}
\sum_{i} T_i(s)(\omega) + \bs \theta_{\bs T(s)} T_i(t)(\omega) = s+t,
\end{equation}
as well as 
\begin{equation}\label{eq:propTnece2}
\frac{L^{i}({T_i(s)+ \bs \theta_{\bs T(s)} T_i(t)})(\omega) }{\alpha_i} \, \, \,  \text{is independent of $i$.}
\end{equation}

According to the definition of $\bs \theta$,  \[
\bs \theta_{\bs T(s)}\bs T(t)(\omega) = \bs T(t)(\bs \theta_{\bs T(s)}(\omega)),
\]
and therefore 
\begin{align*}
    \sum_{i} \bs \theta_{\bs T(s)}T_i(t)(\omega)=\sum_i T_i(t)(\bs \theta_{\bs T(s)}(\omega))=t,
\end{align*}
which justifies \eqref{eq:propTnece1}.

Then we prove \eqref{eq:propTnece2}. Recall a property of local time for continuous semi-martingales \cite[Chapter VI, Corollary 1.9]{MR1725357} 
\begin{align*}
L^{i} (t)(\omega_i)  &= \lim_{\epsilon \to 0} \frac{1}{\epsilon} \int_{0}^{t} \mathbbm{1}_{\{|Y_i(u)(\omega_i)| < \epsilon\}} d [Y_i]_u(\omega_i),
\end{align*}
where $[Y_i]_u$ stands for the quadratic variation of $Y_i$ at time $u$. Hence we obtain that 
\begin{align}\label{eq:localtime}
&L^{i}\left(T_i(s)+ \bs \theta_{\bs T(s)} T_i(t)\right) (\omega) \notag \\
&= \lim_{\epsilon \to 0} \frac{1}{\epsilon} \int_{0}^{T_i(s)(\omega)+ \bs \theta_{\bs T(s)} T_i(t)(\omega)} \mathbbm{1}_{\{|Y_i(u)(\omega)| < \epsilon\}} \, d [Y_i]_u(\omega) \notag \\
&= \lim_{\epsilon \to 0} \frac{1}{\epsilon} \int_{0}^{T_i(s)(\omega)} \mathbbm{1}_{\{|Y_i(u)(\omega)| < \epsilon\}} \, d [Y_i]_u(\omega) \notag \\
& \quad +\lim_{\epsilon \to 0} \frac{1}{\epsilon} \int_{0}^{ T_i(t)(\bs \theta_{\bs T(s)}(\omega))} \mathbbm{1}_{\{|Y_i(u)(\bs \theta_{\bs T(s)}(\omega))| < \epsilon\}} \, d [Y_i]_u(\bs \theta_{\bs T(s)}(\omega))  \notag \\
&= L^{i} \left(T_i(s)(\omega)\right)(\omega)  + L^{i}\left( T_i(t)(\bs \theta_{\bs T(s)}(\omega)) \right) (\bs \theta_{\bs T(s)}(\omega)),
\end{align}
where we use the strong Markov property of $Y_i$. In other words, $L^i$ is a a strong additive functional of $Y_i$; see e.g. \cite[Chapter IV, Section 1]{BlGe68}. Since both $\omega, \bs \theta_{\bs T(s)}(\omega)$ belong to $\Omega_0$, part $\emph{(i)}$ implies that the following quantity is independent of $i$
\begin{align*}
    \frac{L^{i}({T_i(s)+ \bs \theta_{\bs T(s)} T_i(t)})(\omega) }{\alpha_i} =  \frac{L^{i} \left(T_i(s)\right)(\omega) }{\alpha_i} + \frac{L^{i}\left( T_i(t)(\bs \theta_{\bs T(s)}(\omega)) \right) (\bs \theta_{\bs T(s)}(\omega))}{\alpha_i}.
\end{align*}
It completes the proof of \eqref{eq:propTnece2}.

As a final step, we prove the claim \eqref{eq:propTassume}. We show that for any $\bs \theta_{\bs T(s)}(\omega)$ with $\omega \in \Omega_0$, \[\{h_i(l)(\bs \theta_{\bs T(s)}(\omega))\}_{i=1,\dotso,N}\]  have no simultaneous jumps at any local time level $l \geq 0$. Indeed, by a similar argument as \eqref{eq:localtime} we have 
\begin{align}\label{eq:propTnece4}
\frac{L^i(t)(\bs \theta_{\bs T (s)}(\omega))}{\alpha_i}= \frac{L^i(t+T_i(s)(\omega))(\omega)}{\alpha_i}-\frac{L^i(T_i(s)(\omega))(\omega)}{\alpha_i}.
\end{align}
As $\omega \in \Omega_0$, there are no simultaneous jumps of $\{h_i(l)(\omega)\}_{i=1,\dotso,N}$, which can be written as 
\begin{align*}
      \text{Leb}(\{t: L^{i}(t)(\omega) = \alpha_i l\}) \cdot \text{Leb}(\{t: L^{j}(t)(\omega) = \alpha_j l\})= 0, \quad \forall \, l \geq 0, \, \, \forall \, i \not = j.
\end{align*}
Together with \eqref{eq:propTnece4} and the fact that $L^i(T_i(s)(\omega))(\omega)/\alpha_i$ is independent of $i$, one can deduce that 
\begin{align*}
      \text{Leb}(\{t: L^{i}(t)(\bs \theta_{\bs T(s)}(\omega)) = \alpha_i l\}) \cdot \text{Leb}(\{t: L^{j}(t)(\bs \theta_{\bs T(s)}(\omega)\}) = \alpha_j l)= 0, \quad \forall \, l \geq 0, \, \, \forall \, i \not = j,
\end{align*}
which justifies there are no simultaneous jumps of $\{h_i(l)(\bs \theta_{\bs T(s)}(\omega))\}_{i=1,\dotso,N}$ for every $\omega \in \Omega_0$.

\end{proof}
\begin{remark}
Notice that we have a priority scheme when defining stopping points $\bs T^{\epsilon}(t)$ in the proof of Proposition~\ref{prop:T} \emph{(ii)}. However, according to the construction of $\bs T^{\epsilon}$,  $T^{\epsilon}_i(h(k\epsilon)) = h_i(k\epsilon)$ is independent of the priority scheme for every $k \in \mathbb N$. Therefore, as the limit of $\bs T^{\epsilon}(t)$, the stopping point $\bs T(t)$ is also independent of the priority scheme.

\end{remark}

Using the random time change $\bs T = (T_1,  \cdots, T_N)$ constructed above,  we define our diffusion process on $\Gamma$ as the time change of the multi-parameter process, i.e., 
\begin{equation}\label{eq:diffusion}
 X(t) =\left(Y_1(T_1(t)),  \cdots,  Y_N(T_N(t)) \right).
\end{equation}
We have that $X(t) \in \Gamma$ for any $ t \geq 0$ almost surely. Indeed according to Lemma~\ref{lem:simulflat}, at any time $t \geq 0$, at most one $Y_{i}(T_{i}(t))$ is on excursion from $v$, which means at any time $t \geq 0$,  there is at most one $Y_{i}(T_{i}(t))>0$, and $Y_j(T_j(t))=0$ for $j \ne i$.

\begin{remark}\label{rmk:3.4} 
 Heuristically, $T_i(t)$ represents the total time that the Walsh diffusion process spends on edge $i$ up to time $t$ and their sum equals $t$, as shown in Proposition \ref{prop:T}. Moreover (\ref{T:construction}) and Lemma \ref{lem:simulflat} indicate that $(T_i(\cdot), i = 1,2,\cdots, N)$ evolve in one of the following manners. 
 \begin{enumerate}
     \item \emph{Single component increase:} Exactly one of  $(T_i(\cdot), i = 1,2,\cdots, N)$ has a strictly positive rate of change, specifically with rate $1$. This occurs when $\Delta h(L(t))> 0$. For the unique index $i$ such that $\Delta h_i(L(t))>0$, we have $d T_i(t)  = dt$, while $d T_j(t)  = 0$ for all $j \ne i$.
     \item \emph{Simultaneous increase: }All $(T_i(\cdot), i = 1,2,\cdots, N)$ increase simultaneously. This occurs when $\Delta h(L(t)) = 0$, equivalently when $dL(t) > 0$.  As $L(\cdot)$  is the local time of the Walsh diffusion $X$ at vertex $v$ (see Remark \ref{directionProb}), $L(\cdot)$ only increases at $ \{s \geq 0: X(s) = v\}$, which has Lebesgue measure $0$. So we say $(T_i(\cdot), i = 1,2,\cdots, N)$ increase simultaneously at essentially no time.
     
 \end{enumerate}
 
 Accordingly, \eqref{T:construction} can also be written in the differential form $dT_i(t)= \mathbbm{1}_{\{\Delta h_i(L(t))>0\}} dt$ \text{Leb}-$a.s.$ Indeed due to \eqref{eq:hi} and Proposition~\ref{prop:T} (i), we have $l \mapsto h_i(L(l))=T_i(l)$ is Lebesgue-differentiable at $l=t$ whenever  $dL(t)>0$.  Then thanks to the point $(2)$ above, it holds that $dh_i(L(t))=0$ for Leb-a.e. $t$. Moreover, when $\Delta h_i(L(t))>0$, the process $Y_i$ is on an excursion away from the vertex $v$, and hence $L(t)=L(t+\epsilon)$ for small enough $\epsilon >0$. So that we conclude from \eqref{T:construction} that 
 \[
dT_i(t) = dh_i(L(t)) +d\mathbbm{1}_{\{\Delta h_i(L(t))>0\}}(t-h(L(t)))=\mathbbm{1}_{\{\Delta h_i(L(t))>0\}} dt \quad \text{Leb-}a.s.
\]

\end{remark}

Expanding on the concept of strong Markov property of a one-parameter process, we define the strong Markov property of multi-parameter process as the following.

\begin{definition}
 $\bs{Y}(\bs t) = (Y_1(t_1),  \cdots,  Y_N(t_N))$ is a multi-parameter process on a filtered probability space $(\Omega,  \mathcal{F}, (\cF_{\bs s}),\mathbb{P})$ with values in $(\mathbb{R}^N,  \mathcal{B}(\mathbb{R}^N))$.  Let $\bs{\tau} = (\tau_1, \cdots, \tau_N)$ be an almost surely finite stopping point with respect to the filtration $(\mathcal{F}_{\bs s}) $.  We say that the strong Markov property holds at $\bs{\tau}$ for process $\bs{Y}$ if for arbitrary $k$ vectors $\bs{t}_1, \cdots,  \bs{t}_k \in \mathbb{R}^N_+ $ and arbitrary $k$  bounded continuous functions $f_1,\dotso, f_k \in \mathcal{C}_b(\mathbb{R}^N)$,  
\begin{equation}\label{eq:strongmarkov}
\mathbb{E} \left[ \prod_{i=1}^k f_i(\bs{Y}(\bs{\tau}+\bs{t}_i))   \, \Big\vert \, \mathcal{F}_{\bs{\tau}}\right] = \mathbb{E}_{\bs{Y}(\bs{\tau}) }\left[\prod_{i=1}^k f_i(\bs{Y}(\bs{t}_i))  \right].  
\end{equation}

\end{definition}


\begin{remark}
By a standard monotone class argument, the above definition is equivalent to: 
For any $ \bs y \in \mathbb{R}_{+}^{N}$ and positive $\mathcal{F}$-measurable random variable $V:\Omega \to \mathbb R_+$,
\[
\mathbb{E}_{\bs y} [\bs \theta_{\bs\tau} V | \mathcal{F}_{\bs\tau}] = \mathbb{E}_{\bs Y(\bs\tau)} [ V ] 
\]
See \cite[Proposition 5.29]{ccinlar2011probability} for a reference.
\end{remark}

The following lemma provides a sufficient condition for the strong Markov property of multi-parameter processes. 

\begin{lemma}\label{lem:SMP}
If $Y_i,\,  i = 1, \cdots, N$ are independent strong Markov processes,  then strong Markov property holds at any stopping point $\bs{\tau} < \infty $ a.s.  for the multi-parameter process $\bs{Y} = (Y_1, \cdots, Y_N)$. 
\end{lemma}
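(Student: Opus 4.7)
My plan is to verify condition (ii) of the definition by the standard two-step argument: approximate $\bs\tau$ from above by stopping points taking countably many values, establish the SMP at these discrete-valued stopping points by reduction to the ordinary Markov property of each $Y_i$, and then pass to the limit using path right-continuity.

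\emph{Discretization.} For each $n$, set $\tau_i^{(n)}:=2^{-n}\lceil 2^n\tau_i\rceil$, so $\bs\tau^{(n)}$ takes values in $D_n:=(2^{-n}\mathbb{Z}_+)^N$ and $\bs\tau^{(n)}\downarrow\bs\tau$ pointwise. Since $\{\bs\tau^{(n)}\leq\bs s\}=\{\bs\tau\leq 2^{-n}\lfloor 2^n\bs s\rfloor\}$, $\bs\tau^{(n)}$ is itself a stopping point, and one checks $\cF_{\bs\tau}\subseteq\cF_{\bs\tau^{(n)}}$ by applying the defining property of $\cF_{\bs\tau}$ at $2^{-n}\lfloor 2^n\bs s\rfloor$. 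Moreover, for each $\bs t\in D_n$,
\[
\{\bs\tau^{(n)}=\bs t\}=\{\bs\tau^{(n)}\leq\bs t\}\setminus\bigcup_{i=1}^N\{\bs\tau^{(n)}\leq\bs t-2^{-n}\bs e_i\}\in\cF_{\bs t},
\]
where $\bs e_i$ is the $i$-th unit vector, hence $A\cap\{\bs\tau^{(n)}=\bs t\}\in\cF_{\bs t}$ for any $A\in\cF_{\bs\tau^{(n)}}$.

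\emph{SMP at discrete-valued stopping points.} Fix $A\in\cF_{\bs\tau^{(n)}}$, vectors $\bs s_1,\ldots,\bs s_k\in\R_+^N$, and Borel sets $B_1,\ldots,B_k$. Partition:
\[
\P[\bs Y(\bs\tau^{(n)}+\bs s_j)\in B_j\,\forall j,\,A]=\sum_{\bs t\in D_n}\P[\bs Y(\bs t+\bs s_j)\in B_j\,\forall j,\,A\cap\{\bs\tau^{(n)}=\bs t\}].
\]
Each event $A\cap\{\bs\tau^{(n)}=\bs t\}$ lies in $\cF_{\bs t}=\cF^1_{t_1}\vee\cdots\vee\cF^N_{t_N}$. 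The independence of the $Y_i$'s renders the sub $\sigma$-fields $\cF^i_{t_i}$ independent, while the ordinary Markov property of each $Y_i$ at the deterministic time $t_i$ gives that the conditional law of $(Y_i(t_i+\cdot))_i$ given $\cF_{\bs t}$ factorizes as a product indexed by $i$, each factor depending on $\cF_{\bs t}$ only through $Y_i(t_i)$. Reassembling yields condition (ii) at $\bs\tau^{(n)}$; the inclusion $\cF_{\bs\tau}\subseteq\cF_{\bs\tau^{(n)}}$ and the tower property then produce the same identity with $\cF_{\bs\tau^{(n)}}$ replaced by $\cF_{\bs\tau}$ on the conditioning side.

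\emph{Passage to the limit.} Right-continuity of each $Y_i$ gives $\bs Y(\bs\tau^{(n)}+\bs s_j)\to\bs Y(\bs\tau+\bs s_j)$ almost surely as $n\to\infty$, so the left-hand side of the SMP identity at $\bs\tau^{(n)}$ passes to its counterpart at $\bs\tau$ when tested against bounded continuous functions; the right-hand side requires right-continuity of the transition kernel in the starting point, which holds for the reflecting diffusions we are considering. A monotone class argument then extends the resulting identity from bounded continuous functions to arbitrary Borel $B_j$'s, giving condition (ii) at $\bs\tau$. The delicate step here is this final limit, which rests on regularity of the underlying semigroups rather than on any multi-parameter structure; in our setting Feller continuity of each $Y_i$ makes it routine.
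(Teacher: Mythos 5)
Your proof is correct and follows essentially the same route as the paper: establish the strong Markov property first for stopping points with countably many values by partitioning on those values and using the independence and Markov property of the components, then approximate a general stopping point by dyadic-valued stopping points from above. The only real difference is one of detail—you spell out the limit passage (path right-continuity, Feller continuity of the kernels, monotone class), which the paper leaves implicit after introducing the elementary approximating stopping points $\bs{\tau}^{(n)}$.
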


\begin{proof} By a monotone class argument, it suffices to prove (\ref{eq:strongmarkov}) for $f_i$ of the form \[
f_i(\bs y) = \prod_{j = 1}^{N} f_{ij}(y_j), \quad f_{ij} \in  \mathcal{C}_b(\mathbb{R}), \, \,  j = 1,2, \cdots, N. 
\]

First we assume that there exists a countable sequence $\{\bs{s}_n\}_{n\in \mathbb{N}}$ in $\mathbb{R}^N_+$ such that \[\mathbb{P}(\bs{\tau} \in \{\bs{s}_n\}_{n\in \mathbb{N}}) = 1 .\]  Thanks to the property of conditional probability, \[
\mathbb{E} \left[ \prod_{i=1}^k \prod_{j=1}^N f_{ij}(Y_j(\tau_j+t_{ij}))   \, \middle | \, \mathcal{F}_{\bs{\tau}}\right] =\sum_{n = 1}^{\infty}\mathbbm{1}_{\{\bs\tau = \bs s_n\}}\mathbb{E} \left[ \prod_{i=1}^k \prod_{j=1}^N f_{ij}(Y_j(s_{nj}+t_{ij}))   \, \Big\vert \, \mathcal{F}_{\bs{s}_n}\right]   
\]
By the independence of $\{Y_i\}_{i \in \{1,2,\cdots, N\}}$ and Markov property of each $Y_i$, for each $n \in \mathbb{N}$ \begin{align*}
\mathbb{E} \left[ \prod_{i=1}^k \prod_{j=1}^N f_{ij}(Y_j(s_{nj}+t_{ij}))   \, \middle | \, \mathcal{F}_{\bs{s}_n}\right]   
&=
\prod_{j=1}^N \mathbb{E}_{Y_j(\bs s_{nj})}\left[ \prod_{i=1}^k  f_{ij}(Y_j(t_{ij}))   \right]  \\
&=
\mathbb{E}_{\bs Y({\bs s_n})}\left[ \prod_{i=1}^k \prod_{j=1}^N f_{ij}(Y_j(t_{ij}))   \right] .
\end{align*}
Thus  (\ref{eq:strongmarkov}) holds if $\bs\tau$ takes countable many values. 

Passing to the general case we approximate an arbitrary stopping point $\bs{\tau}$ by elementary stopping points,
\[
\bs{\tau} ^{(n)}= \sum_{\bs{s} \in \mathbb{N}^N}2^{-n}\bs{s} \mathbbm{1}_{(2^{-n}\bs{s}, 2^{-n}(\bs{s}+\bs 1)] }(\bs{\tau}),
\]
where $\bs s+\bs 1:=(s_1+1,\dotso,s_N+1)$. For each $n \in \mathbb{N}$, \eqref{eq:strongmarkov} holds at $\bs \tau^{(n)}$, $\bs \tau^{(n)}$ is a decreasing sequence of stopping points, and $\lim_{n\to \infty} \bs \tau^{(n)} = \bs \tau$.  Noticing that for each $j\in \{1,2,\cdots, N\}$, $Y_j$  is a Feller process with continuous paths, it is clear that $\lim_{n \to \infty} Y_j(\tau^{(n)}_j) =  Y_j({\tau_j})$. By a standard monotone class argument and the definition of Feller process, we get the continuity of $y \mapsto \mathbb E_y\left[ \prod_{i=1}^k f_i(Y_j(t_{ij}))  \right]$, which implies 
\begin{equation*}
\lim_{n \to \infty} \mathbb{E}_{Y_j(\tau^{(n)}_j) }\left[\prod_{i=1}^k f_i(Y_j(t_{ij}))  \right]  = 
\mathbb{E}_{Y_j(\tau_j) }\left[\prod_{i=1}^k f_i(Y_j(t_{ij}))  \right];
\end{equation*}
see \cite[Remark 5.35]{ccinlar2011probability} for a reference.
It gives rise to the equality 
\begin{equation*}
\lim_{n \to \infty} \mathbb{E}_{\bs{Y}(\bs{\tau^{(n)}}) }\left[\prod_{i=1}^k f_i(\bs{Y}(\bs{t}_i))  \right]  = 
\mathbb{E}_{\bs{Y}(\bs{\tau}) }\left[\prod_{i=1}^k f_i(\bs{Y}(\bs{t}_i))  \right] .
\end{equation*}

To conclude the result, it remains to show that 
\begin{align}\label{eq:limit}
    \lim\limits_{n \to \infty} \mathbb{E} \left[ \prod_{i=1}^k f_i(\bs{Y}(\bs{\tau^{(n)}}+\bs{t}_i))   \, \middle | \, \mathcal{F}_{\bs{\tau^{(n)}}}\right]= \mathbb{E} \left[ \prod_{i=1}^k f_i(\bs{Y}(\bs{\tau}+\bs{t}_i))   \, \middle | \, \mathcal{F}_{\bs{\tau}}\right].
\end{align}
To this end, we estimate the difference
\begin{equation*}
\left| \mathbb{E} \left[ \prod_{i=1}^k f_i(\bs{Y}(\bs{\tau^{(n)}}+\bs{t}_i))   \, \middle | \, \mathcal{F}_{\bs{\tau^{(n)}}}\right] - \mathbb{E} \left[ \prod_{i=1}^k f_i(\bs{Y}(\bs{\tau}+\bs{t}_i))   \, \middle | \, \mathcal{F}_{\bs{\tau}}\right]\right| 
\leq I+ II,
\end{equation*}
where \begin{align*}
I &=  \left| \mathbb{E} \left[ \prod_{i=1}^k (f_i(\bs{Y}(\bs{\tau^{(n)}}+\bs{t}_i))   - f_i(\bs{Y}(\bs{\tau}+\bs{t}_i)) ) \, \middle | \, \mathcal{F}_{\bs{\tau^{(n)}}}\right] \right|, \\
II & = \left| \mathbb{E} \left[ \prod_{i=1}^k f_i(\bs{Y}(\bs{\tau}+\bs{t}_i))   \, \middle | \, \mathcal{F}_{\bs{\tau^{(n)}}}\right] - \mathbb{E} \left[ \prod_{i=1}^k f_i(\bs{Y}(\bs{\tau}+\bs{t}_i))   \, \middle | \, \mathcal{F}_{\bs{\tau}}\right]\right|.
\end{align*}

Let us denote $$Z_n :=  \mathbb{E} \left[ \prod_{i=1}^k (f_i(\bs{Y}(\bs{\tau^{(n)}}+\bs{t}_i))   - f_i(\bs{Y}(\bs{\tau}+\bs{t}_i)) ) \, \middle | \, \mathcal{F}_{\bs{\tau^{(n)}}}\right].$$ Then by Jensen's inequality and the tower property of conditional expectation $$\mathbb{E}[|Z_n|] \leq \mathbb{E} \left[ \left| \prod_{i=1}^k (f_i(\bs{Y}(\bs{\tau^{(n)}}+\bs{t}_i))   - f_i(\bs{Y}(\bs{\tau}+\bs{t}_i)) )\right| \right],$$ which converges to $0$ by the dominated convergence theorem. The $L^1$ convergence of $\lim_{n \to \infty} \mathbb{E}[|Z_n|] = 0$  implies that, there exists a subsequence $\{n_j\}_{j \in \mathbb{N}}$ such that $\lim_{n_j \to \infty} Z_{n_j} = 0$ almost surely. Therefore $I$ converges to $0$ as $n_j \to \infty$. For the second term $II$, we apply the backward martingale convergence theorem
to obtain that $II \to 0$ as $  n\to \infty$. Hence,  we prove \eqref{eq:limit}, and \eqref{eq:strongmarkov} holds for all stopping point $\bs \tau$.
\end{proof}

Let us prove that the process $ X(\cdot)$ constructed above is a Markov process on the filtered probability space $\left(\Omega, \mathcal{F}, (\mathcal{F}_{\bs T(t)}), \mathbb{P}\right)$. 

\begin{prop}
The process $ X$ defined in \eqref{eq:diffusion}  is Markovian with the right-continuous filtration $(\mathcal{F}_{\bs T(t)})$. 
\end{prop}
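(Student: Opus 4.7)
The plan is to deduce the Markov property of $X$ from two ingredients that are already in hand: the time homogeneity of $\bs T$ proved in Proposition~\ref{propT}(iii), and the strong Markov property of the multi-parameter process $\bs Y$ at an arbitrary stopping point, provided by Lemma~\ref{lem:SMP}. The strategy is to show that $X(s+t)$ can be written as a fixed measurable functional of $\bs Y$ composed with the shift $\bs\theta_{\bs T(s)}$, after which the Markov identity falls out immediately.

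First I would set up the shift identity. Because $\bs T$ is constructed purely from the local times $L^{k,i}$ of $\bs Y$, the map $\bs Y\mapsto \bs T(t)$ is a measurable functional of paths; in particular
\[
T_i(t)(\bs\theta_{\bs T(s)}\omega)=\bs\theta_{\bs T(s)}T_i(t)(\omega),\qquad i=1,\dotso,N,
\]
and together with Proposition~\ref{propT}(iii) this gives
\[
X(s+t)(\omega)=\bs Y\!\left(\bs T(s)(\omega)+\bs\theta_{\bs T(s)}\bs T(t)(\omega)\right)(\omega)=\Phi_t\!\left(\bs\theta_{\bs T(s)}\bs Y\right)(\omega),
\]
where $\Phi_t$ is the deterministic functional $\bs Y\mapsto \bs Y(\bs T(t))$. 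The stability $\bs\theta_{\bs T(s)}\omega\in\Omega_0$ for $\omega\in\Omega_0$ was already observed in the proof of Proposition~\ref{propT}(iii), so $\Phi_t$ is well-defined on the shifted path almost surely.

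Next I would invoke the strong Markov property. Proposition~\ref{propT}(i) asserts that $\bs T(s)$ is a stopping point of $(\mathcal{F}_{\bs s})$, and Lemma~\ref{lem:SMP} then guarantees that, conditional on $\mathcal{F}_{\bs T(s)}$, the shifted field $\{\bs Y(\bs T(s)+\bs u)\}_{\bs u\in\R_+^N}$ has the same law as $\bs Y$ started from $\bs Y(\bs T(s))=X(s)$. Since $X(s+t)=\Phi_t(\bs\theta_{\bs T(s)}\bs Y)$ with $\Phi_t$ deterministic, this yields, for any bounded measurable $f$,
\[
\E\!\left[f(X(s+t))\,\big|\,\mathcal{F}_{\bs T(s)}\right]=\E_{X(s)}\!\left[f(X(t))\right],
\]
which is exactly the Markov property with respect to $(\mathcal{F}_{\bs T(t)})$. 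The $\mathcal{F}_{\bs T(s)}$-measurability of $X(s)=\bs Y(\bs T(s))$ required along the way is part (i) of the SMP formulation.

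I expect the main obstacle to be bookkeeping rather than substance: one must check that the measurable functional $\Phi_t$ really does commute with the shift $\bs\theta_{\bs T(s)}$ — in particular that the time-change equations of Theorem~\ref{thm} are solved by the same rule applied to the shifted path — and that the exceptional null set on which $\bs T$ might fail to be well-defined is preserved under the shift. Both of these are essentially contained in the proof of Proposition~\ref{propT}(iii) (in the derivation of \eqref{eq:localtime} and the subsequent argument that $\bs\theta_{\bs T(s)}\omega\in\Omega_0$), so the proof here is really a short assembly of those earlier facts with Lemma~\ref{lem:SMP}.
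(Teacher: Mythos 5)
Your proposal is correct and follows essentially the same route as the paper: write $X(s+t)=\bs Y(\bs T(s)+\bs\theta_{\bs T(s)}\bs T(t))$ via the time homogeneity of Proposition~\ref{propT}(iii), recognize this as a fixed functional of the shifted path, and apply the strong Markov property of $\bs Y$ at the stopping point $\bs T(s)$ (Lemma~\ref{lem:SMP}) to get $\E[f(X(s+t))\mid\mathcal{F}_{\bs T(s)}]=\E_{X(s)}[f(X(t))]$, exactly as in the paper's display \eqref{eq:mkv}. The only items the paper records that you leave implicit are the routine checks that $(\mathcal{F}_{\bs T(t)})$ is indeed a (right-continuous) filtration and the $\mathcal{F}_{\bs T(s)}$-measurability of $X(s)$, which you do acknowledge in passing.
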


\begin{proof}

Since $\{\bs Y(\bs t)\}_{\bs t \in \mathbb{R}^{N}_+}$ is a continuous multi-parameter process and for any $s\in \mathbb{R}^{N}_+$, $\bs T(s)$ is a stopping time w.r.t. filtration $(\mathcal{F}_{\bs t})$, it can be easily seen that $ X(s)=\bs Y(\bs T(s))$ is $\mathcal{F}_{\bs T(s)}$-measurable. Then we verify that the time changed filtration $(\mathcal{F}_{\bs T(s)})$ is right-continuous. To this end, fix $s_0 \geq 0$ and let $A \in \bigcap_{s >s_0} \mathcal{F}_{\bs T(s)}$. By the definition of stopping point, for each $s >s_0$, and $\bs t \in \R_+^N$
\[ A \cap  \{\bs T(s) \leq \bs t\} \in \mathcal{F}_{\bs t}=\mathcal{F}_{\bs t +},\]
where the last equality is due to the right continuity of filtration $(\mathcal{F}_{\bs t})$ from Lemma~\ref{lem:rightcontinuity}. By standard arguments this condition is equivalent to 
\[ A \cap \{\bs T(s) < \bs t \} \in \mathcal{F}_{\bs t}, \quad \forall \, \bs t \in \mathbb{R}_+^N.\]
Thanks to the continuity of stopping points $s \mapsto \bs T(s)$,
\[A \cap \{\bs T(s_0) <\bs t \}=\bigcap_{s>s_0} A \cap \{ \bs T(s) < \bs t\} \in \mathcal{F}_{\bs t}, \quad \forall \bs t \in \mathbb{R}_+^N, \]
which concludes that $A \in \mathcal{F}_{\bs T(s_0)}$.

Next, to verify the Markov property of $X$, it is sufficient to show that 
\[\E_x [ f(X(t+h)) \, | \, \mathcal{F}_{\bs T(t)}]= \E_{X(t)}[ f(X(h))]\]
for any $f \in C_0(\Gamma)$. By Tietze extension theorem, $f$ can be viewed as a bounded continuous function on $\R_+^N$, and therefore $f(\bs Y(\bs T(t)))$ is well-defined. According to the definition of $X$, it is equivalent to show that 
\begin{align}\label{eq:MP}
\E_{\bs y}[ f(\bs Y( \bs T(t+h))) \, | \, \mathcal{F}_{\bs T(t)}]=\E_{ \bs Y( \bs T(t))} [ f(\bs Y(\bs T(h)))] , 
\end{align}
where $\bs y \in \mathbb{R}_+^N$ is the representation of $x \in \Gamma$; see \eqref{eq:representation}. 

According to Proposition~\ref{prop:T} \emph{(iii)}
\begin{align*}
    f(\bs Y( \bs T(t+h)))= f(\bs Y(\bs T(t) +\bs \theta_{\bs T(t)}\bs T(h))).
\end{align*}
Remember that $\bs Y$ is defined as the canonical process on $\Omega=\mathcal{C}([0,\infty);\mathbb R)^N$, and therefore 
\[f(\bs Y(\bs T(t) +\bs \theta_{\bs T(t)}\bs T(h))(\omega))=f(\bs Y( \bs T(h))(\bs\theta_{\bs T(t)}(\omega)))=\bs\theta_{\bs T(t)}  f(\bs Y( \bs T(h)))(\omega), \]
where the last equality is provided by the definition of the shift operator $\bs \theta$. Taking conditional expectation w.r.t. $\mathcal{F}_{\bs T(t)}$, the two equations above yield
\begin{align*}
\E_{\bs y}[ f(\bs Y( \bs T(t+h))) \, | \, \mathcal{F}_{\bs T(t)}] =  \E_{\bs y}[ \theta_{\bs T(t)}  f(\bs Y( \bs T(h))) \, | \, \mathcal{F}_{\bs T(t)}].
\end{align*}
We completes the proof of \eqref{eq:MP} by invoking the strong Markov property of $\bs Y$ from Lemma~\ref{lem:SMP}.

\end{proof}

Define the semigroup $(P_t)$ of the Markov process $X$ as the following.
\begin{align*}
P_t f (x): = \E_x[ f(X_t)], \quad \forall f \in C_0(\Gamma), \, x \in \Gamma. 
\end{align*}
Next we show that $X$ constructed above is a diffusion process on graph $\Gamma$ which is characterized by the infinitesimal generator ${A}$ and the corresponding domain $\mathcal{D}(A)$ as in \eqref{eq:infinitesimal}.  
\begin{prop}\label{prop:verification}
 For any $ x \in e_i$, $i=1,\dotso, N$, it holds that for all $f \in \mathcal{D}(A)$ $$\lim\limits_{t \to 0} \frac{P_tf(x)-f(x)}{t}=\mathcal{L}_if(x).$$
\end{prop}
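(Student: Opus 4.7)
My plan is to establish the integrated Dynkin-type identity
\begin{equation*}
\E_x[f(X(t))] - f(x) = \E_x\!\left[\int_0^t Af(X(s))\, ds\right]
\end{equation*}
for every $x \in \Gamma$ and every $f \in \mathcal{D}(A)$. Dividing by $t$ and sending $t \to 0^+$, with dominated convergence enabled by the pathwise continuity of $X$ and $Af \in C_0(\Gamma)$, then gives the conclusion, since $Af(x) = \mathcal{L}_i f(x)$ for any $x \in e_i$ (consistently at $v_1$ because $f \in C_{\star}$).

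The starting point is a pathwise decomposition. The remark following \eqref{eq:diffusion} says that at most one coordinate $Y_j(T_j(t))$ is nonzero at any time $t$. Writing $f_j$ for $f$ restricted to $e_j$ in the coordinate $y_j$, so that $f_j(0) = f(v_1)$, this yields
\begin{equation*}
f(X(t)) - f(v_1) = \sum_{j=1}^N \bigl(f_j(Y_j(T_j(t))) - f(v_1)\bigr).
\end{equation*}
Applying It\^o's formula with local time to each reflecting diffusion $Y_j$ and specializing at $s = T_j(t) \le t$ produces
\begin{equation*}
f_j(Y_j(T_j(t))) - f_j(Y_j(0)) = \int_0^{T_j(t)} \mathcal{L}_j f(Y_j(u))\,du + M_j(T_j(t)) + D_j f(v_1)\, L^j(T_j(t)),
\end{equation*}
where $M_j$ is a martingale. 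Conditioning on $(Y_k)_{k \ne j}$ turns $T_j(t)$ into a bounded stopping time for $Y_j$'s own filtration, so optional stopping forces $\E[M_j(T_j(t))] = 0$.

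Summing over $j$ and taking expectations, the left side collapses to $\E_x[f(X(t))] - f(x)$. The local-time contributions vanish because the defining equations of $\bs T$ force $L^j(T_j(t))/\alpha^j$ to equal a common value $\Lambda(t)$ independent of $j$, so
\begin{equation*}
\sum_{j=1}^N D_j f(v_1)\, \E[L^j(T_j(t))] = \E[\Lambda(t)] \sum_{j=1}^N \alpha^j D_j f(v_1) = 0
\end{equation*}
by the gluing condition in $\mathcal{D}(A)$. For the drift-diffusion integrals, the change of variables $u = T_j(s)$ together with $\sum_j dT_j(s) = ds$ gives
\begin{equation*}
\sum_{j=1}^N \int_0^{T_j(t)} \mathcal{L}_j f(Y_j(u))\,du = \int_0^t \sum_{j=1}^N \mathcal{L}_j f(Y_j(T_j(s)))\,dT_j(s) = \int_0^t Af(X(s))\,ds,
\end{equation*}
since on the support of $dT_j$ the process $X(s)$ lies on $e_j$ where $\mathcal{L}_j f = Af$. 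This yields the Dynkin identity above.

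The main obstacle is technical rather than conceptual: carefully verifying that $T_j(t)$ is admissible for optional stopping on each $M_j$ (via independence of the $(Y_k)$ and the stopping-point property of $\bs T(t)$), and justifying the pathwise change of variables under $\bs T$. The latter hinges on the vertex set $\{s : X(s) = v_1\}$ having zero Lebesgue measure, which ensures that the rewriting of $\int_0^t Af(X(s))\,ds$ via the edge-by-edge decomposition is unambiguous.
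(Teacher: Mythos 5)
Your argument is correct, and it reaches the paper's conclusion by a genuinely different decomposition. The paper works directly with the time-changed process $X$: it telescopes $\E_{v_1}[f(X_t)-f(v_1)]$ over downcrossing times between the vertex and level $\delta$, recovers the boundary contribution $\sum_i D_i f(v_1)\,L^i(T_i(t))$ from the downcrossing characterization of local time, handles the excursion pieces by It\^o's formula, and then sends $\delta\to 0$. You instead apply It\^o--Tanaka to each original reflecting diffusion $Y_j$ up to the random time $T_j(t)$ and transport everything through the time change: the boundary terms cancel pathwise because $L^j(T_j(t))/\alpha^j$ is independent of $j$ (Proposition \ref{propT}(ii)) together with the gluing condition in $\mathcal{D}(A)$, and the drift terms recombine into $\int_0^t Af(X(s))\,ds$ via $\sum_j dT_j(s)=ds$, using $f\in C_{\star}$ to make the identification unambiguous even when $X(s)=v_1$ (so the zero-Lebesgue-measure argument you mention is not really needed). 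What your route buys is a clean integrated Dynkin identity without the $\delta$-limit interchanges the paper leaves as ``easily verified''; what it costs is exactly the step you flag: $\E[M_j(T_j(t))]=0$ requires noting that $\{T_j(t)\le s\}\in\mathcal{F}^j_s\vee\bigvee_{k\ne j}\mathcal{F}^k_t$ (from the stopping-point property plus $T_j(t)\le t$), that $M_j$ stays a martingale in the enlargement $\mathcal{F}^j_s\vee\sigma((Y_k)_{k\ne j})$ by independence, and an integrability/localization check on the stochastic integral (e.g.\ boundedness of $f_j'\sigma_j$ on the relevant range), a technicality of the same order as those the paper itself glosses over. Also state explicitly, as the paper does, that the case $x$ in the interior of an edge is the standard one-dimensional computation (or note that your coordinatewise decomposition with base point $f_j(Y_j(0))$ covers it); with that, the proof is complete.
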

\begin{proof}
 It suffices to prove the result for $x=v$. Let us define $|X(t)|:=d(X(t),v)$, where $d$ is the tree metric on graph $\Gamma$. 
 Define a sequence of downcrossing times iteratively for any fixed $\delta >0$
\begin{align*}
&\tau_0^{\delta}=0, \\
& \theta_1^{\delta}=\inf\{ t >  \tau_0^{\delta}: \, |X(t)| =\delta \}, \\
&\tau_1^{\delta}= \inf\{ t >\theta_1^{\delta}: \, |X(t)|=0\}, \\
& \, \, \vdots \\
& \theta_n^{\delta}=\inf\{ t >  \tau_{n-1}^{\delta}: \, |X(t)| =\delta \}, \\
&\tau_n^{\delta}= \inf\{ t >\theta_n^{\delta}: \, |X(t)|=0\}. 
\end{align*}
Then we split $P_t f(v)- f(v)$ as a telescopic sum
\begin{align*}
\E_{v}[f(X_t)-f(v)]=& \sum_{n\geq 0} \E_{v}[ f(X_{t \wedge \theta^{\delta}_{n+1}})-f(X_{t \wedge \tau^{\delta}_{n}})]  \\
&+\sum_{n\geq 1} \E_{v}[ f(X_{t \wedge \tau^{\delta}_{n}})-f(X_{t \wedge \theta^{\delta}_{n}})] =(I)^{\delta}+(II)^{\delta}. 
\end{align*}
Heuristically, $(I)^{\delta}$ is an approximation of dynamics of $f(X(t))$ around $v$, which can be estimated using local times. The term $(II)^{\delta}$ is the dynamic of $f(X(t))$ when $X(t)$ is away from $ v$, and we can compute it using It\^{o}'s formula. 

Note that one can characterize local times by downcrossings, and hence 
\begin{align*}
\lim\limits_{\delta \to 0} (I)^{\delta}&=\lim\limits_{\delta \to 0}    \sum_{i=1}^N D_i f (v) \left(\delta \sum_{n \geq 1} \mathbbm{1}_{ \{\tau_n^{\delta}<t,  X(\theta_n^{\delta}) \in e_i \}}\right) =  \sum_{i=1}^N D_i f(v) \, L^{i}(T_i(t)).
\end{align*}
According to our construction of $\{T_i\}_{i=1}^N$, and the definition of $\mathcal{D}(A)$, we obtain that 
\begin{align*}
\sum_{i=1}^N D_i f(v)L^{i}(T_i(t))=\sum_{i=1}^N \alpha_i D_i f(v) \frac{L^{i}(T_i(t))}{\alpha_i}=0,
\end{align*}
and hence that $\lim\limits_{\delta \to 0} (I)^{\delta}=0$. 

Let us now estimate $(II)^{\delta}$. Denote by $i(t)$ the index of edge which $X(t)$ belongs to, i.e., $i(t)=j$ iff $X(t) \in e_j$. As $\lim\limits_{\delta \to 0} \sum_{n \geq 0} ( \theta^{\delta}_{n+1}-\tau^{\delta}_n )=0$, it can be easily verified that 
\begin{align*}
\lim\limits_{\delta \to 0} (II)^{\delta}= \E_{v} \left[ \int_0^t \mathcal{L}_{i(s)}f(X(s)) \, ds\right].
\end{align*}
Now it is clear that 
\begin{align*}
\lim\limits_{t \to 0} \frac{P_tf(v)-f(v)}{t}= \lim\limits_{t \to 0} \frac{\E _{v}\left[ \int_0^t \mathcal{L}_{i(s)}f(X(s)) \, ds\right]}{t}= \mathcal{L}_if(v), \ \ \text{ for all $i=1,\dotso, N$,}
\end{align*}
where the last the equality is due to our definition of $\mathcal{D}(A)$ in \eqref{eq:infinitesimal}.
\end{proof}

\begin{proof}[Proof of Theorem~\ref{thm}] 
It was shown in \cite{MR1245308} that $(A,\mathcal{D}(A))$ is indeed an infinitesimal generator of a Feller semigroup. By the uniqueness part of Hille-Yosida theorem; see e.g. Lemma \cite[Lemma 17.5]{MR4226142} and Proposition~\ref{prop:verification}, this Feller semigroup must coincide with $(P_t)$ generated by $X(\cdot)$. Furthermore since $t \mapsto \mathcal{F}_{\bs T(t)}$ is right-continuous, it implies that $X(\cdot)$ is a strong Markov process with filtration $(\mathcal{F}_{\bs T(t)})$, which concludes the result. 
\end{proof}

Let us give some interpretations of positive constants $\alpha_i$, $L(t)$, and the time change $T(t)$.  It is intuitive to view $\alpha_i$ as the probability of choosing edge $e_i$ when the process is at vertex $v$. We state this observation in Remark \ref{directionProb}, the proof of which can be found in \cite[Corollary 2.4]{MR1743769}. 

\begin{remark} \label{directionProb}
	For each $i =1,\dotso, N$, we have that $\lim\limits_{\delta \to 0}\P_{v}[X(\theta^{\delta}) \in e_i]=\alpha_i$, where $\theta^{\delta}:=\inf\{ t > 0: \, |X(t)| =\delta \}$. 
\end{remark}

For process $X$, one can still define its local time as limits of downcrossing times as in the case of semimartingales. Adopting the notation from Proposition~\ref{prop:verification}, its local time at the vertex $v$ is given by 
\begin{align}\label{eq:localtimesveri}
    \lim\limits_{\delta \to 0} \delta \sum_{n \geq 0} \mathbbm{1}_{\{\tau^{\delta}_n<t\}}=\sum_{k=1}^N L^i(T_i(t))=\sum_{k=1}^NL^i(h_i(L(t))=\sum_{k=1}^N\alpha_iL(t)=L(t),
\end{align}
which justifies our notation in \eqref{eq:Xlocaltime}.

Let us turn to the time change $T(t)$. It is straightforward that for each $i$,  $T_i(t)$ is the occupation time of $X$ on edge $e_i$, i.e., \[
T_i(t) = \int_0^t \mathbf{1}_{\{X(s) \in e_i\}}(s) \, ds.
\]In Corollary \ref{ergodic}, we investigate the long-term average occupation time for a special case when all edges of $\Gamma$ have finite lengths. This result aligns with \cite[Remark 3.1]{MR4003554}, which we are able to obtain using a simpler approach thanks to our construction of time change $\mathbf{T}$.

\begin{cor}\label{ergodic}
	Suppose each edge $e_i$ is homeomorphic to $[0,l_i]$ for some positive real number $l_i$, then for each $i \in \{1,2,\cdots, N\}$, we have that , \[
	\lim_{t\to \infty}\frac{T_i(t)}{t} = \frac{\alpha_i m_i([0, l_i])}{\sum_{j = 1}^N \alpha_j m_j([0, l_i])} \ \ a.s.,
	\]
	where $m_i$ is the speed measure of process $Y_i$ on the interval $[0, l_i]$, i.e., \[
	m_i([0, l_i]) =\int_0^{l_i }\frac{2}{\sigma_i^2(x)}\exp\left({\int_0^x \frac{2b_i(y)}{\sigma_i^2(y) }dy}\right)  dx.
	\]
\end{cor}
\begin{proof}
	By the construction of $T_i$,  we have \[
	h_i(l) \le T_i(t) \le h_i(l)+h(l+) - h(l), \, \forall l\ge 0,  t \in [h(l), h(l+)].
	\]
	Thus\[
	\frac{h_i(l)}{h(l)} \frac{h(l)}{h(l+)} \le \frac{T_i(t)}{t} \le \frac{h_i(l)+h(l+) - h(l)}{h(l)} ,  \, \forall l\ge 0,  t \in [h(l), h(l+)].
	\]
	Notice that $h$ and all $h_j$ are strictly increasing functions of $l$, thus \[
	\lim_{l \to \infty}\frac{h_i(l)}{h(l)} \frac{h(l)}{h(l+)} \le \lim_{t\to \infty}\frac{T_i(t)}{t} \le \lim_{l \to \infty }\frac{h_i(l)+h(l+) - h(l)}{h(l)} .
	\]
	The left-continuous inverse of $L^i(\cdot)/\alpha_i$, $h_i (l)$,  can be rewritten as $h_i (l)= \rho_i(\alpha_i l)$ where $\rho_i$ is the left-continuous inverse of $L^i(\cdot)$. Thus  $h_i (l)$ has independent stationary increment and $\mathbb{E}[h_i(l)] = \mathbb{E}[\rho_i(\alpha_i l)] = \alpha_i l m_i([0, l_i]) $, see \cite[Chapter II, Section 2.14]{borodin2002handbook}.  By the strong law of large numbers we have that  \[
	\lim_{l \to \infty }\frac{h_i(l)}{l} = \lim_{l \to \infty }\frac{h_i(l+)}{l} = \mathbb{E}[h_i(1)] = \alpha_i m_i([0, l_i]) \ \  a.s.
	\]
	Using $h(\cdot) = \sum_{j = 1}^N h_j(\cdot)$, we obtain that \[
	\lim_{t \to \infty} \frac{T_i(t)}{t} =  \frac{\alpha_i m_i([0, l_i])}{\sum_{j = 1}^N \alpha_j m_j([0, l_i])} \ \ a.s.
	\]
\end{proof}

Let us also mention that our construction can be generalized to diffusions on metric graphs, which are singular processes behaving as Walsh diffusions in neighborhood of each vertex; see \cite{MR1245308} for the precise definition.  
Existence of such diffusions has been studied using Feller's semigroup theory.  An analog of It\^{o}'s formula, and a large derivation principle have also been established in \cite{MR1743769}. In \cite{doi:10.1063/1.4714661}, Brownian motions on a metric graph are defined rigorously and constructed pathwise. By merging two graphs as described in  \cite[Section 4]{doi:10.1063/1.4714661},  we can glue Walsh diffusions constructed in our paper and obtain any desired diffusion on a general metric graph. 

On the other hand, our construction can be generalized directly to diffusions on a metric graph under the assumption that there is no loop in the graph. The key idea is to induct on the number of internal vertices of graph. To elaborate the idea, let us consider a graph with two internal vertices $v$ and $u$ as in Figure~\ref{fig1}. We can view $\Gamma_v$, the star shaped subgraph with vertex $v$, as an edge of the vertex $u$, and carry out the same construction for Walsh diffusions.

For any diffusion process on a graph  $\Gamma$ with $n$ internal vertices and any given initial condition $X(0)$, we can divide the graph $\Gamma$ into some subgraphs  $\Gamma_i, i = 1, \cdots, M$ with internal vertices strictly less than $n$ and the initial condition of diffusion $\tilde Y_i$ on each subgraph $\Gamma_i$ is uniquely determined by the shortest path to $X(0)$ (Since  there are no loops in the graph, the shortest path is unique). We obtain the desired diffusion on $\Gamma$ via a time change of the multi-parameter process whose components are the diffusion processes on these subgraphs, i.e. $X(t) = \left( \tilde Y_1(\tilde T_i(t)), \cdots, \tilde Y_M(\tilde T_M(t))\right)$. By induction, $X(t) = \left( Y_1(T_1(t)), \cdots, Y_N(T_N(t))\right)$.

\begin{figure}[H]
    \centering
    \includegraphics[width=0.5\textwidth]{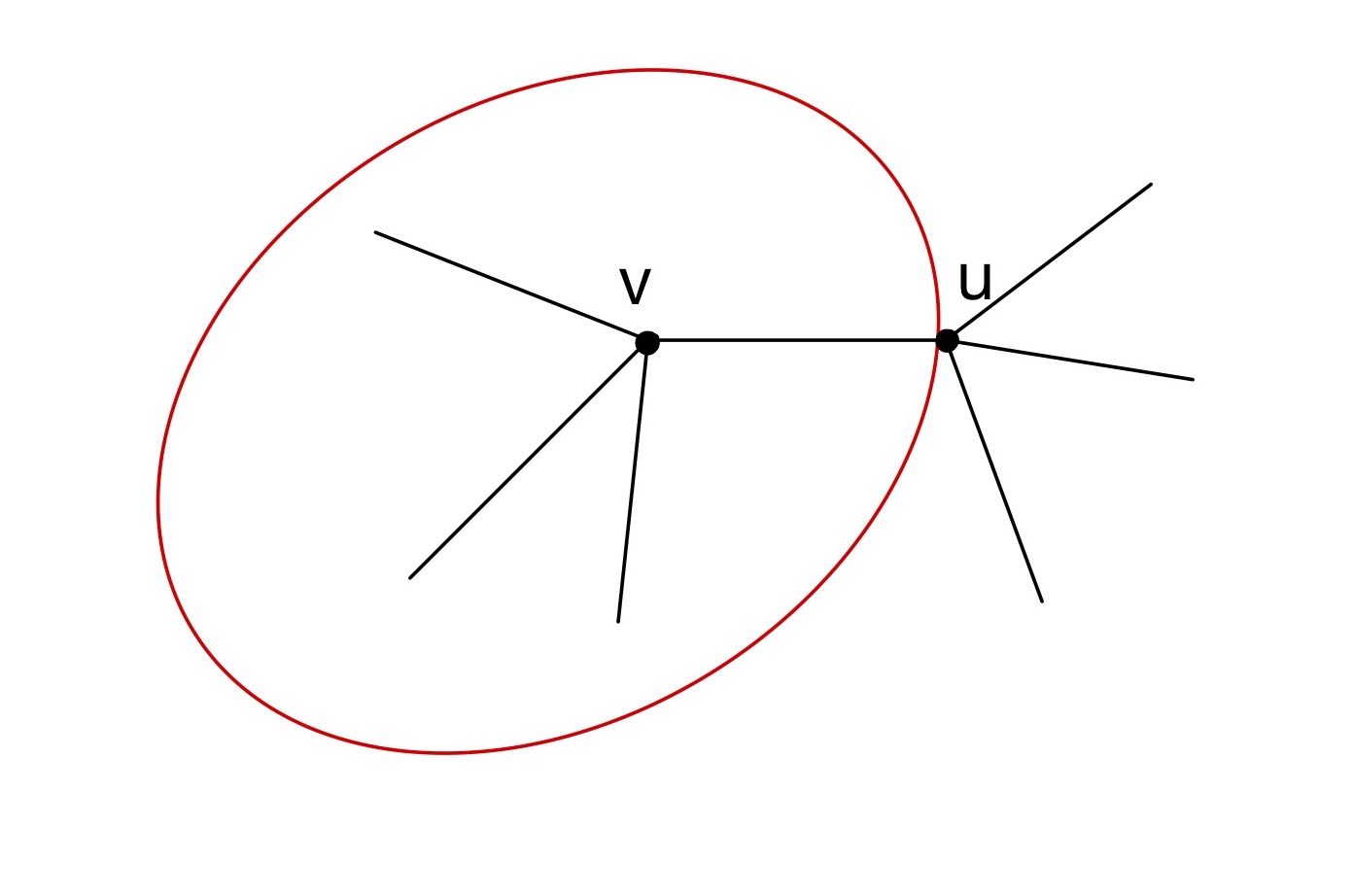}
    \caption{A graph with two internal vertices}
    \label{fig1}
\end{figure}

\bibliographystyle{siam}
\bibliography{ref}
\end{document}